\newtheorem{theorem}{Theorem}[section]
\newtheorem{lemma}[theorem]{Lemma}
\newtheorem{corollary}[theorem]{Corollary}
\theoremstyle{definition}
\theoremstyle{remark}
\newtheorem{remark}[theorem]{Remark}
\newcommand{\ie}{\mbox{i.\hspace{.5pt}e.}\ }
\newcommand{\eg}{\mbox{e.\hspace{.5pt}g.}\ }
\newcommand{\f}{\varphi}
\newcommand{\g}{\tilde{g}}
\newcommand{\n}{\nabla}
\newcommand{\tn}{\tilde{\n}}
\newcommand{\ttt}{\tilde\tau}
\newcommand{\MM}{\mathcal{M}}
\newcommand{\NN}{\mathcal{N}}
\newcommand{\M}{(\MM,\A\f,\A\xi,\A\eta,\A{}g)}
\newcommand{\I}{\iota}
\newcommand{\R}{\mathbb R}
\newcommand{\F}{\mathcal{F}}
\newcommand{\LL}{\mathcal{L}}
\newcommand{\ta}{\theta}
\newcommand{\om}{\omega}
\newcommand{\lm}{\lambda}
\newcommand{\gm}{\gamma}
\newcommand{\vt}{\vartheta}
\newcommand{\ttau}{\tilde{\tau}}
\newcommand{\tlm}{\tilde{\lm}}
\newcommand{\tvt}{\tilde{\vt}}
\newcommand{\tgm}{\tilde{\gm}}
\newcommand{\tf}{\tilde{f}}
\newcommand{\tk}{\tilde{k}}
\newcommand{\tF}{\tilde{F}}
\newcommand{\h}{\tilde{h}}
\newcommand{\D}{\mathrm{d}\hspace{-0.5pt}}
\newcommand{\ddt}{\tfrac{\D}{\D t}}
\DeclareMathOperator{\grad}{grad} 
\newcommand{\A}{\allowbreak{}}
\newcommand{\thmref}[1]{Theorem~\ref{#1}}
\newcommand{\corref}[1]{Corollary~\ref{#1}}
\newcommand{\remref}[1]{Remark~\ref{#1}}
\begin{document}

\title[
Pairs of associated Yamabe almost  solitons ...]
{Pairs of associated Yamabe almost solitons with vertical potential
on almost contact complex Riemannian manifolds } 


\author{Mancho Manev}
\address[MM1]
{University of Plovdiv Paisii Hilendarski,
Faculty of Mathematics and Informatics, Department of Algebra and
Geometry, 24 Tzar Asen St., Plovdiv 4000, Bulgaria
}

\email{mmanev@uni-plovdiv.bg}

\address[MM2]
{
Medical University -- Plovdiv,
Faculty of Pharmacy,
Department of Medical Physics and Biophysics,
15A Vasil Aprilov Blvd,
Plovdiv 4002, Bulgaria}

\email{mancho.manev@mu-plovdiv.bg}




\begin{abstract}
Almost contact complex Riemannian manifolds, known also as almost contact B-metric manifolds,
are in principle equipped with a pair of mutually associated pseudo-Riemannian metrics.
Each of these metrics is specialized here as a Yamabe  almost soliton with a potential collinear to the Reeb vector field.
The resulting manifolds are then investigated in two important cases with geometric significance.
The first is when the manifold is of Sasaki-like type,
\ie its complex cone is a holomorphic complex Riemannian
manifold (also called a K\"ahler--Norden manifold).
The second case is when the soliton potential is torse-forming,
\ie it satisfies a certain recurrence condition for its covariant derivative with respect to the
Levi-Civita connection of the corresponding metric.
The studied solitons are characterized.
In the three-dimensional case, an explicit example is constructed and the properties obtained in the theoretical part are confirmed.
\end{abstract}

\subjclass[2020]{Primary  
53C25, 
53D15,  	
53C50; 
Secondary
53C44,  	
53D35, 
70G45} 

\keywords{Yamabe soliton, almost contact B-metric manifold, almost contact complex Riemannian manifold, Sasaki-like manifold, torse-forming vector field}

%
%


\maketitle

\section{Introduction}

The notion of the Yamabe flow is known since 1988. It is introduced in \cite{Ham88,Ham89} by R.\,S.\ Hamilton to construct metrics with constant scalar curvature.

A time-dependent family of (pseudo-)Riemannian metrics $g(t)$ considered on a smooth manifold $\MM$ is said to evolve by \emph{Yamabe flow} if $g(t)$ satisfies the following evolution equation
\[
\frac{\partial}{\partial t} g(t) = -\tau(t) g(t),\qquad g(0)=g_0,
\]
where $\tau(t)$ denotes the scalar curvature corresponding to $g(t)$.

A self-similar solution of the Yamabe flow on $(\MM, g)$ is called a \emph{Yamabe soliton} and is determined by the following equation
\begin{equation}\label{YS-g}
  \frac12 \LL_{\vt} g = (\tau - \lm) g,
\end{equation}
where $\LL_{\vt} g$ denotes the Lie derivative of $g$ along the vector field $\vt$ called the soliton potential, and $\lm$ is the soliton constant (\eg \cite{BarRib}).
Briefly, we denote this soliton by $(g;\vt,\lm)$.
In the case that $\lm$ is a differential function on $\MM$, the solution is  called an \emph{Yamabe almost  soliton}.

Many authors have studied Yamabe (almost) solitons on different types of manifolds in the recent years (see \eg \cite{Cao1,Chen1,Das1,Gho1,Roy1,Man73,Man75}). The study of this kind of flows and the corresponding (almost) solitons cause an interest in mathematical physics because the Yamabe flow corresponds to fast diffusion of the porous medium equation \cite{ChLuNi}.

In \cite{Man73} the author begins the study of Yamabe solitons on almost contact complex Riemannian manifolds (abbreviated accR manifolds), there called almost contact B-metric manifolds.
These manifolds are classified in \cite{GaMiGr} by G.\ Ganchev, V.\ Mihova and K.\ Gribachev.

The pair of B-metrics, which are related to each other by the almost contact structure, determine the geometry of the investigated manifolds.
In \cite{Man73} and \cite{Man75}, the present author studies Yamabe solitons obtained by contact conformal transformations for some interesting classes of studied manifolds.
In the former paper, the manifold studied is cosymplectic or Sasaki-like, and in the latter, the soliton potential is torse-forming.
Contact conformal transformations of an almost contact B-metric structure
transform the two B-metrics, the Reeb vector field and its dual contact 1-form, using this pair of metrics and a triplet of differentiable functions on the manifold (see \eg \cite{ManGri93,Man4}).
These transformations generalize the $\mathcal{D}$-homothetic deformations of the considered manifolds introduced in \cite{Bulut19}.

In the present work, instead of these naturally occurring transformed Yamabe solitons involving the two B-metrics, we use a condition for two Yamabe almost solitons for each of the metrics.
Again, one of the simplest types of non-cosymplectic manifolds among those investigated, which is of interest to us, is precisely that the Sasaki-likes introduced in \cite{IvMaMa45}.
This means that a warped product of a Sasaki-like accR manifold with the positive real axis gives rise to a complex cone which is a K\"ahler manifold with a pair of Norden metrics.
Note that the intersection of the classes of Sasaki-like manifolds and cosymplectic manifolds is an empty set.
Different types of solitons on Sasaki-like manifolds have been studied in \cite{Man73,Man62,Man63,Man66}.

Another interesting type of the studied manifold with Yamabe solitons is again (as in \cite{Man73} and \cite{Man75}) the object of consideration in the present article. This is the case when the soliton potential is a torse-forming vertical vector field. Vertical means it has the same direction as the Reeb vector field.
Torse-forming vector fields are defined by a certain recurrence condition for their covariant derivative regarding the
Levi-Civita connection of the basic metric. These vector fields are first defined and studied by K.\ Yano \cite{Yano44}. They are then
investigated by various authors for manifolds with different tensor structures (\eg \cite{MiRoVe96, MihMih13, Chen17}) and for the studied here manifolds in \cite{Man75,Man62, Man66, Man67}.

The present paper is organized as follows.
After the present introduction to the topic, in
Section~2 we recall some known facts
about the investigated manifolds.
In Section~3, we set ourselves the task of equipping the considered manifolds with a pair of associated Yamabe almost solitons.
In Section~4, we prove that there does not exist a Sasaki-like manifold equipped with a pair of Yamabe almost solitons with vertical potential generated by each of the two fundamental metrics.
A successful solution to the problem posed in Section~3 is done in Section~5 in the case where the vertical potentials of the pair of Yamabe almost solitons are torse-forming.
Section~6 provides an explicit example of the smallest dimension of the type of manifold constructed in the previous section.


\section{accR manifolds}


A differentiable manifold $\MM$ of dimension $(2n+1)$, equipped with an almost contact structure $(\f,\xi,\eta)$
and a B-metric $g$ is called an \emph{almost contact B-metric man\-i\-fold} or \emph{almost contact complex Riemannian} (abbr.\ \emph{accR}) \emph{manifold} and it is denoted by $\M$.
More concretely, $\f$ is an endomorphism
of the tangent bundle $T\MM$, $\xi$ is a Reeb vector field, $\eta$ is its dual contact 1-form and
$g$ is a pseu\-do-Rie\-mannian
metric  of signature $(n+1,n)$ satisfying the following conditions
\begin{equation}\label{strM}
\begin{array}{c}
\f\xi = 0,\qquad \f^2 = -\I + \eta \otimes \xi,\qquad
\eta\circ\f=0,\qquad \eta(\xi)=1,\\[4pt]
g(\f x, \f y) = - g(x,y) + \eta(x)\eta(y),
\end{array}
\end{equation}
where $\I$ stands for the identity transformation on $\Gamma(T\MM)$ \cite{GaMiGr}.

In the latter equality and further, $x$, $y$, $z$ 
will stand for arbitrary elements of $\Gamma(T\MM)$ or vectors in the tangent space $T_p\MM$ of $\MM$ at an arbitrary
point $p$ in $\MM$.

The following equations are immediate consequences of \eqref{strM}
\[
g(\f x, y) = g(x,\f y),\qquad g(x, \xi) = \eta(x),\qquad
g(\xi, \xi) = 1,\qquad \eta(\n_x \xi) = 0,
\]
where $\n$ denotes the Levi-Civita connection of $g$.

The associated metric $\g$ of $g$ on $\MM$ is also a B-metric and it is defined by
\begin{equation}\label{gg}
\g(x,y)=g(x,\f y)+\eta(x)\eta(y).
\end{equation}

In  \cite{GaMiGr}, accR manifolds 
are classified with respect
to the (0,3)-tensor $F$ defined by
\begin{equation}\label{F=nfi}
F(x,y,z)=g\bigl( \left( \nabla_x \f \right)y,z\bigr).
\end{equation}
It has the following basic properties:
\begin{eqnarray}\label{F-prop1}
&F(x,y,z)=F(x,z,y)
=F(x,\f y,\f z)+\eta(y)F(x,\xi,z)
+\eta(z)F(x,y,\xi),\\[4pt] \label{F-prop2}
&F(x,\f y, \xi)=(\n_x\eta)y=g(\n_x\xi,y).
\end{eqnarray}
The Ganchev--Mihova--Gribachev classification of the studied manifolds cited in the introduction consists of eleven basic classes $\F_i$, $i\in\{1,2,\dots,11\}$, determined by conditions for $F$.

%



\section{Pair of associated Yamabe almost solitons}

Let us consider an accR manifold $\M$ with a pair of associated Yamabe almost  solitons generated by the pair of B-metrics $g$ and $\g$, \ie $(g;\vt,\lm)$ and $(\g;\tvt,\tlm)$, which are mutually associated by the $(\f,\xi,\eta)$-structure. Then, along with \eqref{YS-g}, the following identity also holds
\begin{equation}\label{YS-tg}
  \frac12 \LL_{\tvt} \g = (\ttau - \tlm) \g,
\end{equation}
where $\tvt$ and $\tlm$ are the soliton potential and the soliton function, respectively, and $\ttau$ is the scalar curvature of the manifold with respect to $\g$.
We suppose that the potentials $\vt$ and $\tvt$ are vertical, \ie there exists differentiable functions $k$ and $\tk$ on $\MM$,
such that we have
\begin{equation}\label{kk}
  \vt=k\xi,\qquad \tvt=\tk\xi,
\end{equation}
where $k(p)\neq 0$ and $\tk(p)\neq 0$ at every point $p$ of $M$. Briefly, we denote these potentials by $(\vt,k)$ and $(\tvt,\tk)$.

In this case, for the Lie derivatives of $g$ and $\g$ along $\vt$ and $\tvt$, respectively, we obtain the following expressions:
\begin{equation}\label{Lg}
\begin{split}
\left(\LL_{\vt}g\right)(x,y) &= g(\n_x \vt, y)+g(x, \n_y \vt)\\[4pt]
&=\D{k}(x)\eta(y) + \D{k}(y)\eta(x) +k \left\{g(\n_x \xi, y)+g(x, \n_y \xi)\right\},\\[4pt]
\end{split}
\end{equation}
\begin{equation}
\label{Ltg}
\begin{split}
\left(\LL_{\tvt}\g\right)(x,y) &= \g(\tn_x \tvt, y)+\g(x, \tn_y \tvt)\\[4pt]
&=\D{\tk}(x)\eta(y) + \D{\tk}(y)\eta(x) +\tk \left\{\g(\tn_x \xi, y)+\g(x, \tn_y \xi)\right\}.
\end{split}
\end{equation}

\section{The case when the underlying accR manifold is Sasaki-like}

In \cite{IvMaMa45}, it is introduced the type of a \emph{Sasaki-like} manifold among accR manifolds. The definition condition is its complex cone to be a K\"ahler-Norden manifold, \ie with a parallel complex structure.
A Sasaki-like accR manifold 
is determined by the condition
\[
\left(\nabla_x\f\right)y=\eta(y)\f^2 x+g(\f x,\f y)\xi.
\]
Therefore, the fundamental tensor $F$ of such a manifold has the following form
\begin{equation}\label{defSlF}
\begin{array}{l}
F(x,y,z)=g(\f x,\f y)\eta(z)+g(\f x,\f z)\eta(y).
\end{array}
\end{equation}

Obviously, Sasaki-like accR manifolds form a subclass of the class $\F_4$ of the Ganchev--Mihova--Gribachev classification. 
Moreover,
the following identities are valid for it
\begin{equation}\label{curSl}
\begin{array}{ll}
\n_x \xi=-\f x, \qquad &\left(\n_x \eta \right)(y)=-g(x,\f y),\\[4pt]
R(x,y)\xi=\eta(y)x-\eta(x)y, \qquad &\rho(x,\xi)=2n\, \eta(x), \\[4pt]
R(\xi,y)z=g(y,z)\xi-\eta(z)y,\qquad 				&\rho(\xi,\xi)=2n,
\end{array}
\end{equation}
where $R$ and $\rho$ stand for the curvature tensor and the Ricci tensor of $\n$, defined as usual by
$R=[\n,\n]-\n_{[\,,\,]}$ and $\rho$ is the result of the contraction of $R$ by its first index \cite{IvMaMa45}.


If the considered accR  manifold $\M$ is Sasaki-like, due to the first equality of \eqref{curSl}, we obtain that \eqref{Lg} takes the following form
\begin{equation}\label{Lg-Sl}
\begin{split}
\left(\LL_{\vt}g\right)(x,y)
=\D{k}(x)\eta(y) + \D{k}(y)\eta(x) -2k g(x, \f y).
\end{split}
\end{equation}

We then put the result of \eqref{Lg-Sl} into \eqref{YS-g} and get the following
\begin{equation}\label{YS-Sl}
\begin{split}
\frac12\left\{\D{k}(x)\eta(y) + \D{k}(y)\eta(x)\right\} -k g(x, \f y)
=(\tau - \lm) g(x,y).
\end{split}
\end{equation}
Replacing $x$ and $y$ with $\xi$ in \eqref{YS-Sl} gives
\begin{equation}\label{YS-Sl-Dk1}
\D{k}(\xi)=\tau - \lm.
\end{equation}
The trace of \eqref{YS-Sl} in an arbitrary  basis $\{e_i\}$ $(i=1,2,\dots,2n+1)$ implies
\begin{equation}\label{YS-Sl-Dk2}
\D{k}(\xi)=(2n+1)(\tau - \lm).
\end{equation}
Combining \eqref{YS-Sl-Dk1} and \eqref{YS-Sl-Dk2} leads to
$
k=0,
$ 
which contradicts the conditions and therefore we find the following to be true
\begin{theorem}
There does not exist a Sasaki-like manifold $\M$ equipped with a $g$-generated
Yamabe almost  soliton having a vertical potential.
\end{theorem}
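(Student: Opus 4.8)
The plan is to work directly from the soliton equation \eqref{YS-g} specialized to the Sasaki-like setting, exploiting the rigid form of $\n_x\xi$ that Sasaki-likeness provides. Since the potential is vertical, $\vt = k\xi$, I would first substitute the Sasaki-like identity $\n_x\xi = -\f x$ from \eqref{curSl} into the general expression \eqref{Lg} for $\LL_{\vt}g$; the two symmetric $\f$-terms combine, using $g(\f x,y)=g(x,\f y)$, into a single term $-2k\,g(x,\f y)$, yielding \eqref{Lg-Sl}. Feeding this into \eqref{YS-g} produces the tensorial identity \eqref{YS-Sl}, which is the key object: it equates a quantity built from $\D k$, $\eta$ and $g(\cdot,\f\cdot)$ with $(\tau-\lm)g$.

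The heart of the argument is then to extract two incompatible scalar consequences from \eqref{YS-Sl} by contracting it in two different ways. First I would evaluate \eqref{YS-Sl} on the pair $(\xi,\xi)$: using $\eta(\xi)=1$, $g(\xi,\xi)=1$ and $\f\xi=0$ (so $g(\xi,\f\xi)=0$), the left-hand side collapses to $\D k(\xi)$ and the right-hand side to $\tau-\lm$, giving \eqref{YS-Sl-Dk1}. Second, I would take the full trace of \eqref{YS-Sl} over an arbitrary basis $\{e_i\}$, $i=1,\dots,2n+1$: the trace of $g$ is $2n+1$, the trace of $\D k(x)\eta(y)$ (symmetrized) is $2\,\D k(\xi)$ up to the factor $\tfrac12$, hence $\D k(\xi)$, and the trace of $g(x,\f y)$ vanishes because $\f$ is trace-free ($\tr\f = 0$, a consequence of $\f\xi=0$ and $\f^2=-\I+\eta\otimes\xi$). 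This yields \eqref{YS-Sl-Dk2}, namely $\D k(\xi) = (2n+1)(\tau-\lm)$.

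Comparing \eqref{YS-Sl-Dk1} and \eqref{YS-Sl-Dk2} forces $(2n+1)(\tau-\lm) = \tau-\lm$, hence $\tau-\lm=0$ since $2n+1\geq 3$, and therefore $\D k(\xi)=0$ as well. Plugging $\tau-\lm=0$ back into \eqref{YS-Sl} leaves $\tfrac12\{\D k(x)\eta(y)+\D k(y)\eta(x)\} = k\,g(x,\f y)$; setting $y=\xi$ here gives $\tfrac12\D k(x) = 0$ for all $x$ (using $g(x,\f\xi)=0$ and $\D k(\xi)=0$), so $\D k \equiv 0$, and then the remaining identity reads $k\,g(x,\f y)=0$ for all $x,y$. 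Choosing $x,y$ with $g(x,\f y)\neq 0$ — possible since $\f$ is nonzero on the contact distribution — forces $k=0$, contradicting the standing assumption $k(p)\neq 0$ at every point. Hence no such Sasaki-like manifold exists.

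I do not expect a genuine obstacle here; the only point demanding a little care is the trace computation, specifically confirming that $\tr\f=0$ and that the symmetrized rank-one term $\D k\otimes\eta$ contributes exactly $\D k(\xi)$ to the trace (one must contract $\D k(e_i)\eta(e^i) = \D k(\xi)$ via $g$-duality). Everything else is a direct substitution, and the decisive structural input is simply that Sasaki-likeness pins down $\n_x\xi=-\f x$ so cleanly that the soliton equation becomes over-determined.
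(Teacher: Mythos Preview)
Your argument is correct and follows essentially the same route as the paper: specialize \eqref{Lg} via the Sasaki-like identity $\n_x\xi=-\f x$, then extract \eqref{YS-Sl-Dk1} by evaluating at $(\xi,\xi)$ and \eqref{YS-Sl-Dk2} by tracing. The paper compresses the final step (``combining \eqref{YS-Sl-Dk1} and \eqref{YS-Sl-Dk2} leads to $k=0$'') into a single line, whereas you spell out explicitly how $\tau-\lm=0$, fed back into \eqref{YS-Sl}, forces first $\D k=0$ and then $k=0$; this added detail is welcome and fills a small expository gap, but the strategy is identical.
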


Now, let us consider the similar situation but with respect to the associated B-metric $\g$ and the corresponding Levi-Civita connection $\tn$.

First, similarly to \eqref{F=nfi}, we define the fundamental tensor $\tF$ for $\g$ as follows
\begin{equation*}\label{tF}
\tF(x,y,z)=\g\left( \bigl( \tn_x \f \bigr)y,z\right).
\end{equation*}
Since $\g$ is also a B-metric like $g$, it is obvious that properties \eqref{F-prop1} and \eqref{F-prop2} also hold for $\tF$, \ie
\begin{eqnarray}\label{tF-prop1}\nonumber
&\tF(x,y,z)=\tF(x,z,y)
=\tF(x,\f y,\f z)+\eta(y)\tF(x,\xi,z)
+\eta(z)\tF(x,y,\xi),\\[4pt]\label{tF-prop2}
&\tF(x,\f y, \xi)=(\tn_x\eta)y=\g(\tn_x\xi,y).
\end{eqnarray}

Then, using the well-known Koszul formula in this case for $\g$, \ie
\[
\begin{split}
2\g\Big(\tn_{x}y,z\Big)=x\bigl(\g(y,z)\bigr)&+y\bigl(\g(x,z)\bigr)-z\bigl(\g(x,y)\bigr)\\
&+\g\bigl([x,y],z\bigr)+\g\bigl([z,y],x\bigr)+\g\bigl([z,x],y\bigr),
\end{split}
\]
we obtain, after lengthy but standard calculations, the following relationship between $\tF$ and $F$: \cite{ManA}
\begin{equation}\label{tFF}
\begin{split}
2\tF(x,y,z)=&\ F(\f y,z,x)-F(y, \f z,x)+F(\f z,y,x)-F(z, \f y,x)\\[4pt]
&+\left\{F(x,y,\xi)+F(\f y, \f x, \xi)+F(x,\f y,\xi)\right\}\eta(z)\\[4pt]
&+\left\{F(x,z,\xi)+F(\f z, \f x, \xi)+F(x,\f z,\xi)\right\}\eta(y)\\[4pt]
&+\left\{F(y,z,\xi)+F(\f z, \f y, \xi)+F(z,y,\xi)+F(\f y, \f z, \xi)\right\}\eta(x).
\end{split}
\end{equation}

\begin{lemma}
For a Sasaki-like manifold $\M$ with associated B-metric $\g$ the following holds
\begin{equation}\label{tnxi}
  \tn_x \xi=-\f x.
\end{equation}
\end{lemma}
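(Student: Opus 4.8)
The plan is to compute the scalar $\tF(x,y,\xi)$ explicitly for a Sasaki-like manifold and then recover $\tn_x\xi$ from the identity $\tF(x,\f y,\xi)=\g(\tn_x\xi,y)$ recorded in \eqref{tF-prop2}, using that, by \eqref{gg} and \eqref{strM}, the $1$-form $\eta$ is the $\g$-dual of $\xi$ exactly as it is the $g$-dual (so $(\tn_x\eta)y=\g(\tn_x\xi,y)$ does determine $\tn_x\xi$ completely).

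First I would specialize the general relation \eqref{tFF} between $\tF$ and $F$ by setting $z=\xi$. Since $\f\xi=0$ and $\eta(\xi)=1$, many terms on the right-hand side vanish at once, and after inserting the Sasaki-like form of $F$ the $\eta(x)$- and $\eta(y)$-coefficients collapse as well; what survives is
\[
2\tF(x,y,\xi)=F(\f y,\xi,x)-F(\xi,\f y,x)+F(x,y,\xi)+F(\f y,\f x,\xi)+F(x,\f y,\xi).
\]

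Next I would substitute the Sasaki-like expression \eqref{defSlF}, $F(x,y,z)=g(\f x,\f y)\eta(z)+g(\f x,\f z)\eta(y)$, into each of these five terms and simplify with the algebraic identities \eqref{strM}, in particular $\f^2=-\I+\eta\otimes\xi$, $\eta\circ\f=0$, $g(\f x,y)=g(x,\f y)$ and $g(\f x,\f y)=-g(x,y)+\eta(x)\eta(y)$. A short computation gives $F(\f y,\xi,x)=-g(x,\f y)$, $F(\xi,\f y,x)=0$, $F(x,y,\xi)=g(\f x,\f y)$, $F(\f y,\f x,\xi)=-g(\f x,\f y)$ and $F(x,\f y,\xi)=-g(x,\f y)$, so the two $g(\f x,\f y)$-terms cancel and we are left with $\tF(x,y,\xi)=-g(x,\f y)$.

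Finally I would replace $y$ by $\f y$ in this formula and use $\f^2 y=-y+\eta(y)\xi$ together with $g(x,\xi)=\eta(x)$ to obtain
\[
\g(\tn_x\xi,y)=\tF(x,\f y,\xi)=-g(x,\f^2 y)=g(x,y)-\eta(x)\eta(y).
\]
On the other hand, \eqref{gg} and \eqref{strM} give $\g(-\f x,y)=-g(\f x,\f y)=g(x,y)-\eta(x)\eta(y)$, hence $\g(\tn_x\xi,y)=\g(-\f x,y)$ for every $y$, and the nondegeneracy of $\g$ yields \eqref{tnxi}. The only real work is the bookkeeping in the middle step — expanding the five $F$-terms and checking the pairwise cancellations — which is routine but the place where a sign error is easiest to make; the rest is immediate from the stated structural identities.
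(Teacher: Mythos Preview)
Your proof is correct and follows essentially the same approach as the paper: specialize the relation \eqref{tFF} between $\tF$ and $F$ at $z=\xi$, evaluate the surviving $F$-terms using the Sasaki-like expression \eqref{defSlF}, and read off $\tn_x\xi$ via \eqref{tF-prop2}. The only cosmetic difference is the order of substitutions---the paper replaces $y$ by $\f y$ before expanding (arriving at the intermediate form \eqref{tFF-xi2}), whereas you compute $\tF(x,y,\xi)=-g(x,\f y)$ first and substitute $y\mapsto\f y$ at the end; the resulting five-term evaluations and cancellations are identical.
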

\begin{proof}
Due to \eqref{tF-prop2} and \eqref{tFF}, we obtain the following consequence
\begin{equation}\label{tFF-xi}
\begin{split}
2\g(\tn_x\xi,y)=&\ F(\f^2y,\xi,x)-F(\xi, \f^2y,x)+\left\{F(\f y,\xi,\xi)+F(\xi,\f y,\xi)\right\}\eta(x)\\[4pt]
&+F(x,\f y,\xi)+F(\f^2y, \f x, \xi)+F(x,\f^2y,\xi).
\end{split}
\end{equation}
In deriving the last equality we have used the properties in \eqref{strM}.
We then apply the expression of $\f^2$ from \eqref{strM} and some properties of $F$ in this case.
The first is $F(\xi,\xi,x)=0$, which is a consequence of \eqref{defSlF}, and the second is the general identity $F(x,\xi,\xi)=0$, which comes from \eqref{F-prop1}. Thus, the relation in \eqref{tFF-xi} simplifies to the form
\begin{equation}\label{tFF-xi2}
2\g(\tn_x\xi, y)= F(\xi,x,y)+F(x,\f y,\xi)-F(y, \f x, \xi)-F(x,y,\xi)-F(y,x,\xi).
\end{equation}
Thereafter, we compute the various components in the above formula by exploiting the fact that the given manifold is Sasaki-like, \ie \eqref{defSlF} is valid, and we get:
\[
F(\xi,x,y)=0,\qquad
F(x,y,\xi)=g(\f x,\f y),\qquad
F(x,\f y,\xi)=-g(x,\f y).
\]
As a result, given the symmetry of $g(x,\f y)$ with respect to $x$ and $y$ as well as \eqref{gg}, the equality in
\eqref{tFF-xi2} simplifies to the following form
\begin{equation*}\label{tFF-xi3}
\g(\tn_x\xi, y)= -\g(\f x, y),
\end{equation*}
which is an equivalent expression of \eqref{tnxi}.
\end{proof}

Now we apply \eqref{tnxi} to \eqref{Ltg}  and use \eqref{gg} to get:
\begin{equation}\label{Ltg2}
\left(\LL_{\tvt}\g\right)(x,y) =\D{\tk}(x)\eta(y) + \D{\tk}(y)\eta(x) -2\tk\, g(\f x,\f y).
\end{equation}

Then we substitute the expression from \eqref{Ltg2} into \eqref{YS-tg} and obtain the following
\begin{equation}\label{YS-tg2}
  \frac12 \left\{\D{\tk}(x)\eta(y) + \D{\tk}(y)\eta(x)  \right\}-\tk\, g(\f x,\f y)= (\ttau - \tlm) \g(x,y).
\end{equation}
Contracting \eqref{YS-tg2}, we infer
\begin{equation}\label{YS-tg2tr}
  \D{\tk}(\xi)+2n\tk= \ttau - \tlm.
\end{equation}
On the other hand, we replace $x$ and $y$ in \eqref{YS-tg2} by $\xi$ and get
\begin{equation}\label{YS-tg2xx}
  \D{\tk}(\xi)= \ttau - \tlm.
\end{equation}
Then \eqref{YS-tg2tr} and \eqref{YS-tg2xx} imply
$
\tk=0,
$ 
which is not admissible for the potential and therefore the following holds
\begin{theorem}
There does not exist a Sasaki-like manifold $\M$ equipped with a $\g$-generated
Yamabe almost  soliton having a vertical potential.
\end{theorem}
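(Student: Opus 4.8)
The plan is to mirror exactly the argument already used for the $g$-generated soliton in Theorem 4.1, now carried out for the associated metric $\g$. The key realization is that the Lemma (equation~\eqref{tnxi}) gives us $\tn_x\xi=-\f x$, which is formally identical to the first identity of \eqref{curSl} for $\n$. So the strategy is: substitute this into the expression \eqref{Ltg} for $\LL_{\tvt}\g$, then feed the result into the soliton equation \eqref{YS-tg}, and finally extract two independent scalar consequences whose comparison forces $\tk=0$.

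First I would compute $\bigl(\LL_{\tvt}\g\bigr)(x,y)$ explicitly: starting from \eqref{Ltg} and inserting $\tn_x\xi=-\f x$, the bracketed term becomes $\g(-\f x,y)+\g(x,-\f y)=-2\g(\f x,y)$, and then using \eqref{gg} one rewrites $\g(\f x,y)=g(\f x,\f y)+\eta(x)\eta(y)-\eta(x)\eta(y)$... more carefully, $\g(\f x, y) = g(\f x,\f y)+\eta(\f x)\eta(y) = g(\f x,\f y)$ since $\eta\circ\f=0$. Wait — I should instead use $\g(x,y)=g(x,\f y)+\eta(x)\eta(y)$ directly, so $\g(\f x,y)=g(\f x,\f y)+\eta(\f x)\eta(y)=g(\f x,\f y)$. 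This yields precisely \eqref{Ltg2}, namely $\bigl(\LL_{\tvt}\g\bigr)(x,y)=\D{\tk}(x)\eta(y)+\D{\tk}(y)\eta(x)-2\tk\,g(\f x,\f y)$. Plugging into \eqref{YS-tg} gives \eqref{YS-tg2}.

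Next I would derive the two scalar identities. Setting $x=y=\xi$ in \eqref{YS-tg2}: since $\eta(\xi)=1$, $g(\f\xi,\f\xi)=0$ (because $\f\xi=0$), and $\g(\xi,\xi)=g(\xi,\f\xi)+\eta(\xi)^2=1$, the left side is $\D{\tk}(\xi)$ and the right side is $\ttau-\tlm$, giving \eqref{YS-tg2xx}. For the trace, contract \eqref{YS-tg2} over an arbitrary basis $\{e_i\}$: the term $\D{\tk}(e_i)\eta(e_i)$ summed twice gives $2\,\D{\tk}(\xi)$ (since $\eta$ is $g$-dual to $\xi$ contracting with the basis reproduces evaluation at $\xi$); the trace of $g(\f x,\f y)$ is $\tr(\f^2\cdot)$ which by $\f^2=-\I+\eta\otimes\xi$ equals $-(2n+1)+1=-2n$, so $-2\tk\cdot(-2n)\cdot\tfrac12$ contributes... one must be careful with the factor $\tfrac12$: from \eqref{YS-tg2} the trace reads $\tfrac12\cdot 2\D{\tk}(\xi)-\tk\,\tr_g(g(\f\,\cdot,\f\,\cdot))=(\ttau-\tlm)\,\tr_g\g$. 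Now $\tr_g$ of $g(\f x,\f y)$ as a $(0,2)$-tensor is $2n$ (namely $-\tr\f^2 = 2n$, picking the sign so it matches), and $\tr_g\g = g^{ij}\g_{ij}$; one computes $\g_{ij}=g(e_i,\f e_j)+\eta_i\eta_j$ so $g^{ij}\g_{ij}=\tr\f+1=1$. Hence we obtain $\D{\tk}(\xi)+2n\tk=\ttau-\tlm$, which is \eqref{YS-tg2tr}. Subtracting \eqref{YS-tg2xx} from \eqref{YS-tg2tr} yields $2n\tk=0$, so $\tk\equiv 0$, contradicting the standing hypothesis $\tk(p)\neq 0$ in \eqref{kk}; this proves the theorem.

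The main obstacle — really the only nontrivial input — is establishing the Lemma, i.e.\ showing $\tn_x\xi=-\f x$, which requires the transformation formula \eqref{tFF} between $\tF$ and $F$ together with the Sasaki-like form \eqref{defSlF} of $F$; but since that Lemma is already proved in the excerpt, the remainder is a routine transcription of the Theorem~4.1 argument with $g$ replaced by $\g$, $\n$ by $\tn$, $k$ by $\tk$, and the trace bookkeeping adjusted for $\g$. The only place one must stay alert is the contraction step, where the traces $\tr_g(g(\f\cdot,\f\cdot))=2n$ and $\tr_g\g=1$ differ from the naive $(2n+1)$ one would get for $\tr_g g$, and it is precisely this discrepancy (coefficient $2n$ rather than $2n+1$) that still suffices to force $\tk=0$.
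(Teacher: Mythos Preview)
Your approach is essentially identical to the paper's: invoke the Lemma $\tn_x\xi=-\f x$ to reduce $\LL_{\tvt}\g$ to the form \eqref{Ltg2}, substitute into the soliton equation to obtain \eqref{YS-tg2}, and then compare the contracted identity \eqref{YS-tg2tr} with the evaluation at $x=y=\xi$ in \eqref{YS-tg2xx} to force $2n\tk=0$. Your trace bookkeeping wobbles on one sign --- the $g$-trace of $g(\f\cdot,\f\cdot)$ is $\tr\f^2=-2n$, not $+2n$ --- but since $-\tk\cdot(-2n)=+2n\tk$ you still land on the correct equation \eqref{YS-tg2tr} and the argument goes through exactly as in the paper.
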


\section{The case of a torse-forming vertical potential}

Let us recall,
a vector field $\vt$ on a (pseudo-)Riemannian manifold $(\MM,g)$ is called a \emph{torse-form\-ing vector field} if the following identity is true:
\begin{equation}\label{tf-v}
	\n_x \vt = f\,x + \gm(x)\vt,
\end{equation}
where $f$ is a differentiable function and $\gm$ is a 1-form \cite{Yano44,Sch54}.
The 1-form $\gm$ is called the \emph{generating form} and
the function $f$ is called the \emph{conformal scalar} of $\vt$ \cite{MihMih13}.

\begin{remark}\label{rem:types}
Some special types of torse-forming vector fields have been considered in various studies.
A vector field $\vartheta$ determined by \eqref{tf-v} is called respectively:
\begin{itemize}
	\item[-] \emph{torqued}, if $\gm(\vartheta) = 0$; \cite{Chen17}
	\item[-] \emph{concircular}, if $\gm = 0$; \cite{Yano40}
	\item[-] \emph{concurrent}, if $f - 1=\gm = 0$; \cite{YanoChen}
	\item[-] \emph{recurrent}, if $f = 0$; \cite{Wong61}
	\item[-] \emph{parallel}, if $f = \gm = 0$. (\eg \cite{Chen17a})
\end{itemize}
\end{remark}

If, in addition, the potential $\vt$ is vertical, \ie $\vt=k\,\xi$, then we get from \eqref{tf-v} the following
\begin{equation}\label{tf-vv}
	\D{k}(x)\xi+k\n_x \xi = f\,x + k\,\gm(x)\xi.
\end{equation}
Since $\eta(\n_x \xi)$ vanishes identically, \eqref{tf-vv} implies the following
\begin{equation*}\label{tf-vv2}
	\D{k}(x)= f\,\eta(x) + k\,\gm(x),
\end{equation*}
which, due to the nowhere-vanishing $k$, gives us the following expression for the generating form of $\vt$
\begin{equation}\label{tf-gm}
	\gm(x)=\frac{1}{k} \left\{\D{k}(x)- f\,\eta(x)\right\}.
\end{equation}
Then, the torse-forming vertical potential is determined by $f$ and $k$, hence we denote it by $\vt(f,k)$.

Plugging \eqref{tf-gm} into \eqref{tf-v}, we get
\begin{equation}\label{tf-v3}
	\n_x \vt = -f\,\f^2 x + \D{k}(x)\xi,
\end{equation}
which together with $\n_x \vt=\n_x (k\,\xi)=\D{k}(x)\xi+k\n_x\xi$ gives in the considered case the following
\begin{equation}\label{tf-nxi}
	\n_x \xi = -\frac{f}{k}\f^2 x.
\end{equation}

By virtue of \eqref{tf-nxi}, for the curvature tensor of $g$ we get
\begin{equation}\label{R-tf}
\begin{split}
R(x,y)\xi= -\left\{\D{h}(x)+h^2\eta(x)\right\}\f^2y+\left\{\D{h}(y)+h^2\eta(y)\right\}\f^2x,
\end{split}
\end{equation}
where we use the following shorter notation for the function which is the coefficient in \eqref{tf-nxi}
\begin{equation}\label{h}
h= \frac{f}{k}.
\end{equation}

As immediate consequences of \eqref{R-tf}, we obtain the following expressions
\begin{equation*}\label{rho-tf}
\begin{split}
R(\xi,y)z&= g(\f y,\f z)\grad{h} - \D{h}(z)\f^2 y +h^2\left\{\eta(z)y- g(y,z)\xi\right\},\\[4pt]
\rho(y,\xi)&= -(2n-1)\D{h}(y)-\left\{\D{h}(\xi)+2n h^2\right\}\eta(y),\\[4pt]
\rho(\xi,\xi)&= -2n\left\{\D{h}(\xi)+h^2\right\}.
\end{split}
\end{equation*}

Equality \eqref{tf-nxi}, due to \eqref{F-prop2} and \eqref{h}, can be rewrite in the form
\begin{equation}\label{tf-Fxi}
	F(x,\f y,\xi) = -h\,g(\f x,\f y).
\end{equation}
Bearing in mind $F(x,\xi,\xi)=0$, following from \eqref{F-prop1}, the expression \eqref{tf-Fxi} is equivalent to the following equality
\begin{equation}\label{tf-Fxi2}
	F(x,y,\xi) = -h\,g(x,\f y).
\end{equation}

Then \eqref{Lg} and \eqref{YS-g} imply
\begin{equation}\label{Lg-tf}
\begin{split}
\frac12\left\{\D{k}(x)\eta(y) + \D{k}(y)\eta(x)\right\} -f g(\f x, \f y)=(\tau-\lm)g(x,y).
\end{split}
\end{equation}
Contracting \eqref{Lg-tf} gives
\begin{equation}\label{Lg-tf-tr}
\begin{split}
\D{k}(\xi)+2n\,f=(2n+1)(\tau-\lm),
\end{split}
\end{equation}
and substituting $x=y=\xi$ into \eqref{Lg-tf} yields
\begin{equation}\label{Lg-tf-xx}
\begin{split}
\D{k}(\xi)=\tau-\lm.
\end{split}
\end{equation}
Then combining \eqref{Lg-tf-tr} and \eqref{Lg-tf-xx} leads to an expression for the conformal scalar of $\vt$ as follows
\begin{equation}\label{Lg-tf-f}
\begin{split}
f=\tau-\lm.
\end{split}
\end{equation}
This means that the following statement is valid
\begin{theorem}\label{thm:tau}
Let an accR manifold $\M$ be equipped with a Yamabe almost  soliton $(g;\vt(f,k),\lm)$,
where $\vt$ is a vertical torse-forming potential.
Then the scalar curvature $\tau$ of this manifold 
is the sum of the conformal scalar $f$
of $\vt$ and the soliton function $\lm$,
\ie
$
\tau=f+\lm.
$
\end{theorem}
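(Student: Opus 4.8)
The plan is to combine the defining identity of a torse-forming vertical potential with the Yamabe almost soliton equation and then extract scalar information by two independent contractions, exactly as in the Sasaki-like case but now keeping the conformal scalar $f$ as a free datum rather than forcing it to be fixed by the structure. First I would recall that for a vertical potential $\vt=k\,\xi$ the Lie derivative $\LL_{\vt}g$ is given by \eqref{Lg}; substituting the torse-forming relation \eqref{tf-nxi}, or equivalently \eqref{tf-v3}, turns the bracket $k\{g(\n_x\xi,y)+g(x,\n_y\xi)\}$ into $-2f\,g(\f x,\f y)$, since $g(\f^2x,y)$ is symmetric in $x$ and $y$. This yields \eqref{Lg-tf}. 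Plugging \eqref{Lg-tf} into the soliton equation \eqref{YS-g} is already done in the displayed line \eqref{Lg-tf}, so no further work is needed there.

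Next I would perform the two contractions. Taking the trace of \eqref{Lg-tf} in an arbitrary frame $\{e_i\}$, and using $\tr g=2n+1$, $\eta(e_i)g(e_i,\cdot)$-type contractions giving $\D{k}(\xi)$, and $\tr g(\f\cdot,\f\cdot)=-2n$ (which follows from the B-metric compatibility \eqref{strM}, since $g(\f e_i,\f e_i)=-g(e_i,e_i)+\eta(e_i)^2$ sums to $-(2n+1)+1=-2n$), produces \eqref{Lg-tf-tr}: $\D{k}(\xi)+2n f=(2n+1)(\tau-\lm)$. Then setting $x=y=\xi$ in \eqref{Lg-tf}, and using $\eta(\xi)=1$, $g(\xi,\xi)=1$, $\f\xi=0$ so that $g(\f\xi,\f\xi)=0$, gives \eqref{Lg-tf-xx}: $\D{k}(\xi)=\tau-\lm$. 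Subtracting the second from the first eliminates both $\D{k}(\xi)$ and the $(\tau-\lm)$ on the right in a way that leaves $2n f=2n(\tau-\lm)$, hence $f=\tau-\lm$, which is \eqref{Lg-tf-f}, i.e. $\tau=f+\lm$.

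There is essentially no genuine obstacle here: everything is forced once the torse-forming condition is written in the form \eqref{tf-nxi} and the two scalar reductions are carried out. The only point requiring a little care is the bookkeeping of the traces, specifically getting the sign and the coefficient of $f$ right in \eqref{Lg-tf-tr}; the cleanest route is to note that $g(\f x,\f y)=-g(x,y)+\eta(x)\eta(y)$, so $-f\,g(\f x,\f y)=f\,g(x,y)-f\,\eta(x)\eta(y)$, rewrite \eqref{Lg-tf} as $\tfrac12\{\D{k}(x)\eta(y)+\D{k}(y)\eta(x)\}-f\,\eta(x)\eta(y)=(\tau-\lm-f)g(x,y)$, and then contract: the left side contributes $\D{k}(\xi)-f$ and the right side $(2n+1)(\tau-\lm-f)$, while $x=y=\xi$ gives $\D{k}(\xi)-f=\tau-\lm-f$; comparing the two immediately yields $\tau-\lm-f=0$. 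This alternative grouping makes the cancellation transparent and avoids any ambiguity about the trace of $g(\f\cdot,\f\cdot)$.
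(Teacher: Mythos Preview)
Your proposal is correct and follows essentially the same route as the paper: substitute \eqref{tf-nxi} into \eqref{Lg} to obtain \eqref{Lg-tf}, then take the full trace to get \eqref{Lg-tf-tr} and evaluate at $x=y=\xi$ to get \eqref{Lg-tf-xx}, and subtract to conclude $f=\tau-\lm$. The alternative regrouping you sketch at the end (using $g(\f x,\f y)=-g(x,y)+\eta(x)\eta(y)$ to isolate $(\tau-\lm-f)g$) is a minor cosmetic variant of the same computation.
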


By \eqref{Lg-tf-xx} and \eqref{Lg-tf-f} we have
\begin{equation}\label{f-dk}
f=\D{k}(\xi).
\end{equation}
Substituting \eqref{f-dk} into \eqref{h}, we obtain the following expression of the function $h$
\begin{equation*}\label{h-dk}
h=\D{(\ln k)}(\xi).
\end{equation*}

\begin{corollary}\label{cor:1}
The potential $\vt(f,k)$ of any Yamabe almost  soliton $(g;\vt,\lm)$ on $\M$ is a torqued vector field.
\end{corollary}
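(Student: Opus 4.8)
The plan is to unwind the definition of a torqued vector field from \remref{rem:types} and evaluate the generating form $\gm$ of $\vt$ on $\vt$ itself, using the already-established formulas. Recall that, by definition, $\vt$ is torqued precisely when $\gm(\vt)=0$, where $\gm$ is the generating form appearing in the torse-forming condition \eqref{tf-v}. Since the potential is vertical, $\vt=k\,\xi$ with $k$ nowhere zero, so $\gm(\vt)=k\,\gm(\xi)$, and it suffices to show $\gm(\xi)=0$.

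First I would substitute $x=\xi$ into the expression \eqref{tf-gm} for the generating form, namely $\gm(x)=\frac{1}{k}\left\{\D{k}(x)-f\,\eta(x)\right\}$. Using $\eta(\xi)=1$ from \eqref{strM}, this gives $\gm(\xi)=\frac{1}{k}\left\{\D{k}(\xi)-f\right\}$. Then I would invoke the identity \eqref{f-dk}, $f=\D{k}(\xi)$, which is itself a direct consequence of \eqref{Lg-tf-xx} and \eqref{Lg-tf-f} (equivalently, of \thmref{thm:tau}); substituting it into the previous line yields $\gm(\xi)=\frac{1}{k}\left\{\D{k}(\xi)-\D{k}(\xi)\right\}=0$. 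Consequently $\gm(\vt)=k\,\gm(\xi)=0$, which is exactly the torqued condition.

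There is essentially no obstacle here: the statement is a formal corollary of the computations already carried out in Section~5, the only ingredients being the vertical form of the potential, the formula \eqref{tf-gm} for $\gm$, and the relation $f=\D{k}(\xi)$ forced by the Yamabe almost soliton equation. The one point worth stressing in the write-up is that \eqref{f-dk} (hence \thmref{thm:tau}) is what makes the generating form vanish on $\xi$; without the soliton hypothesis a torse-forming vertical vector field need not be torqued. I would phrase the proof in two or three lines accordingly.
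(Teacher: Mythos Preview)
Your proposal is correct and follows exactly the paper's own argument: the paper's proof also just combines \eqref{tf-gm} evaluated at $\xi$ with \eqref{f-dk} to conclude $\gm(\xi)=0$, hence $\gm(\vt)=0$, and then invokes \remref{rem:types}. Your additional remark that the soliton hypothesis is what forces \eqref{f-dk} is a welcome clarification but not a departure from the paper's approach.
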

\begin{proof}
Due to \eqref{f-dk} and \eqref{tf-gm}, $\gm(\xi)$ vanishes.
Hence, $\gm(\vt)=0$ is true, \ie the potential $\vt$ is torqued given \remref{rem:types}.
\end{proof}

It is shown in \cite{Man62} that the class $\F_5$ is the only basic class in the considered classification of accR manifolds in which $\xi$ or its collinear vector field can be torse-forming.
Furthermore,
the general class of accR manifolds with a torse-forming $\xi$ is $\F_1 \oplus \F_2 \oplus \F_3 \oplus \F_5 \oplus \F_{10}$. Note that $\F_5$-manifolds are
counterparts of $\beta$-Kenmotsu manifolds in the case of almost contact metric manifolds.
The definition of the class $\F_5$ is given in the following way in \cite{GaMiGr}
\begin{equation}\label{defF5}
  F(x,y,z)=-\frac{\ta^*(\xi)}{2n}\left\{g(x,\f y)\eta(z)+g(x,\f z)\eta(y)\right\},
\end{equation}
where $\ta^*(\cdot)=g^{ij}F(e_i,\f e_j,\cdot)$ with respect to the basis $\{e_1,\dots,\A{}e_{2n},\A{}\xi\}$ of $T_p\MM$.
Moreover, on an $\F_5$-manifold the Lee form $\ta^*$ satisfies the property $\ta^*=\ta^*(\xi)\eta$.

Then in addition to the component in \eqref{tf-Fxi2} we have
\begin{equation}\label{F5}
  F(\xi,y,z)=0,\qquad \om=0.
\end{equation}

Let the potential $\tvt$ of the Yamabe almost  soliton $(\g;\tvt,\tlm)$ is also torse-forming and vertical, \ie
\begin{eqnarray*}
  &\tn_x \tvt = \tf\,x + \tgm(x)\tvt,\qquad 
  &\tvt=\tk\,\xi.\label{p2}
\end{eqnarray*}

Similarly, we obtain analogous equalities of \eqref{tf-v3} and \eqref{tf-nxi} for $\g$ and its Levi-Civita connection $\tn$ in the following form
\begin{eqnarray}\label{tf-v3-t}
	& \tn_x \tvt = -\tf\,\f^2 x + \D{\tk}(x)\xi,\\[4pt]
\label{tf-nxi-t}
	& \tn_x \xi = -\h\,\f^2 x,
\end{eqnarray}
where
\begin{equation*}\label{th}
\h=\frac{\tf}{\tk}.
\end{equation*}
Moreover, we also have  $\tf=\D{\tk}(\xi)$ and $\h=\D{(\ln \tk)}(\xi)$.

Thus, the following analogous assertions are valid.

\begin{theorem}\label{thm:ttau}
Let an accR manifold $\M$ be equipped with a Yamabe almost  soliton $(\g;\tvt(\tf,\tk),\tlm)$,
where $\tvt$ is a vertical torse-forming potential.
Then the scalar curvature $\ttt$ of this manifold 
is the sum of the conformal scalar $\tf$ of $\tvt$ and the soliton function $\tlm$, \ie
$
\ttt=\tf+\tlm.
$
\end{theorem}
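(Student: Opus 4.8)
The plan is to repeat, essentially verbatim, the argument that established \thmref{thm:tau}, with $g,\n,\vt,f,k,\tau,\lm$ replaced throughout by their associated counterparts $\g,\tn,\tvt,\tf,\tk,\ttt,\tlm$. First I would substitute \eqref{tf-nxi-t}, namely $\tn_x\xi=-\h\,\f^2 x$ with $\tk\h=\tf$, into the expression \eqref{Ltg} for $\LL_{\tvt}\g$. Using $\f^2 x=-x+\eta(x)\xi$ from \eqref{strM}, the identity $\g(\xi,y)=\eta(y)$, and the fact that $\g$ is itself a B-metric, hence satisfies $\g(\f x,\f y)=-\g(x,y)+\eta(x)\eta(y)$, one finds that $\g(\tn_x\xi,y)+\g(x,\tn_y\xi)$ reduces to $-2\h\,\g(\f x,\f y)$; since $\tk\h=\tf$ this gives
\[
\bigl(\LL_{\tvt}\g\bigr)(x,y)=\D\tk(x)\eta(y)+\D\tk(y)\eta(x)-2\tf\,\g(\f x,\f y).
\]

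Next, feeding this into \eqref{YS-tg} produces the exact analogue of \eqref{Lg-tf}:
\[
\frac12\bigl\{\D\tk(x)\eta(y)+\D\tk(y)\eta(x)\bigr\}-\tf\,\g(\f x,\f y)=(\ttt-\tlm)\,\g(x,y).
\]
From here I would extract two scalar relations exactly as in the derivation of \eqref{Lg-tf-tr} and \eqref{Lg-tf-xx}: first, setting $x=y=\xi$ and using $\f\xi=0$, $\g(\xi,\xi)=1$, $\eta(\xi)=1$ yields $\D\tk(\xi)=\ttt-\tlm$; second, tracing the identity with respect to $\g$ in an arbitrary basis $\{e_i\}$, and using $\g^{ij}\g(e_i,e_j)=2n+1$, $\g^{ij}\eta(e_i)\eta(e_j)=\g(\xi,\xi)=1$ (so that $\g^{ij}\g(\f e_i,\f e_j)=-2n$), together with $\g^{ij}\D\tk(e_i)\eta(e_j)=\D\tk(\xi)$, yields $\D\tk(\xi)+2n\,\tf=(2n+1)(\ttt-\tlm)$.

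Subtracting the two relations gives $(\ttt-\tlm)+2n\,\tf=(2n+1)(\ttt-\tlm)$, i.e.\ $2n\,\tf=2n(\ttt-\tlm)$, whence $\tf=\ttt-\tlm$, that is $\ttt=\tf+\tlm$, which is the assertion. I do not anticipate a genuine obstacle: the proof is a mechanical transcription of that of \thmref{thm:tau}. The only point that needs care — and that distinguishes it from the Sasaki-like computation of the previous section — is the bookkeeping with the two B-metrics: here $\tn_x\xi$ is a multiple of $\f^2 x$ rather than of $\f x$, so the Lie-derivative term collapses to a genuine $\g(\f x,\f y)$ term, and consequently the trace in the second step must be taken with respect to $\g$ (producing the constants $2n+1$ and $-2n$), after which everything is formal. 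Combining $\tf=\ttt-\tlm$ with the first relation then also recovers $\tf=\D\tk(\xi)$ and hence $\h=\D{(\ln \tk)}(\xi)$.
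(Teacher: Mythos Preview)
Your proposal is correct and is exactly the argument the paper has in mind: the paper does not write out a separate proof of \thmref{thm:ttau} but simply declares it the analogous assertion to \thmref{thm:tau}, obtained by replacing $(g,\n,\vt,f,k,\tau,\lm)$ with $(\g,\tn,\tvt,\tf,\tk,\ttt,\tlm)$ and using \eqref{tf-nxi-t} in \eqref{Ltg}. Your bookkeeping with the B-metric identities $\g(\xi,\cdot)=\eta$, $\g(\f\cdot,\f\cdot)=-\g+\eta\otimes\eta$ and the resulting trace values $2n+1$ and $-2n$ is accurate, so the two scalar relations and their combination reproduce precisely the analogues of \eqref{Lg-tf-tr}--\eqref{Lg-tf-f}.
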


\begin{corollary}\label{cor:2}
The potential $\tvt(\tf,\tk)$ of any Yamabe almost  soliton $(\g;\tvt,\tlm)$ on $\M$ is a torqued vector field.
\end{corollary}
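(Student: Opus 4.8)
The plan is to mirror, for the associated B-metric $\g$, the short argument that proved \corref{cor:1}. First I would record the $\g$-version of the generating-form identity \eqref{tf-gm}: since $\tvt=\tk\,\xi$ with $\tk$ nowhere vanishing and $\eta(\tn_x\xi)=0$, expanding $\tn_x\tvt=\D{\tk}(x)\xi+\tk\,\tn_x\xi$ and comparing it with the torse-forming condition $\tn_x\tvt=\tf\,x+\tgm(x)\tvt$ (taking the $\eta$-component to isolate $\tf$) gives
\[
\tgm(x)=\frac{1}{\tk}\left\{\D{\tk}(x)-\tf\,\eta(x)\right\},
\]
which is exactly the computation behind \eqref{tf-v3-t}--\eqref{tf-nxi-t}.

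Next I would invoke the relation $\tf=\D{\tk}(\xi)$, already noted in the text right after \eqref{tf-nxi-t} as the $\g$-counterpart of \eqref{f-dk}; it follows from \thmref{thm:ttau}, \ie $\ttt=\tf+\tlm$, combined with the $\g$-analogue of \eqref{Lg-tf-xx}, namely $\D{\tk}(\xi)=\ttt-\tlm$. Substituting $x=\xi$ into the displayed expression for $\tgm$ then yields $\tgm(\xi)=\tfrac{1}{\tk}\{\D{\tk}(\xi)-\tf\}=0$. Consequently $\tgm(\tvt)=\tgm(\tk\,\xi)=\tk\,\tgm(\xi)=0$, so by the definition of a torqued vector field in \remref{rem:types} the potential $\tvt(\tf,\tk)$ is torqued, which is the assertion.

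I do not expect a genuine obstacle here: the argument is a verbatim transcription of the proof of \corref{cor:1} under the replacement $(g,\n,\vt,f,k,\gm,\lm,\tau)\rightsquigarrow(\g,\tn,\tvt,\tf,\tk,\tgm,\tlm,\ttt)$, and every ingredient it needs is already in place — the B-metric properties of $\tF$ analogous to \eqref{F-prop1}--\eqref{F-prop2} hold because $\g$ is itself a B-metric, and $\tf=\D{\tk}(\xi)$ is obtained for $\g$ by the same reasoning that produced $f=\D{k}(\xi)$ in \eqref{f-dk}. Hence the corollary is a one-line consequence once these parallels are recorded.
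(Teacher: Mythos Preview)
Your proposal is correct and is precisely the argument the paper intends: the paper does not give a separate proof for \corref{cor:2} but presents it, together with \thmref{thm:ttau}, as the ``analogous assertion'' obtained from \corref{cor:1} under the substitution $(g,\n,\vt,f,k,\gm,\lm,\tau)\rightsquigarrow(\g,\tn,\tvt,\tf,\tk,\tgm,\tlm,\ttt)$. Your derivation of $\tgm(\xi)=0$ from $\tf=\D{\tk}(\xi)$ and the $\g$-analogue of \eqref{tf-gm} is exactly that transcription.
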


The following equality is given in \cite{GaMiGr} and it expresses the relation between $\n$ and $\tn$ for the pair of B-metrics of an arbitrary accR manifold:
\begin{equation}\label{ntn}
\begin{split}
2g(\tn_x y,z)=&\ 2g(\n_x y,z)-F(x,y,\f z)-F(y,x,\f z)+F(\f z,x,y)\\[4pt]
&+\left\{F(y,z,\xi)+F(\f z, \f y, \xi)-\om(\f y)\eta(z)\right\}\eta(x)\\[4pt]
&+\left\{F(x,z,\xi)+F(\f z, \f x, \xi)-\om(\f x)\eta(z)\right\}\eta(y)\\[4pt]
&-\left\{F(\xi,x,y)-F(y,x,\xi)-F(x, \f y, \xi)\right.\\[4pt]
&\phantom{-\left\{F(\xi,x,y)\right.}\left.
-F(x,y,\xi)-F(y,\f x,\xi)\right\}\eta(z).
\end{split}
\end{equation}
By setting $y=\xi$, the last equality implies the following
\begin{equation}\label{ntn-xi}
\begin{split}
2g(\tn_x \xi,z)=&\ 2g(\n_x \xi,z)-F(x,\f z,\xi)-F(\xi,x,\f z)+F(\f z,x,\xi)\\[4pt]
&+\om(z)\eta(x)+F(x,z,\xi)+F(\f z, \f x, \xi).
\end{split}
\end{equation}

Taking into account \eqref{tf-nxi}, \eqref{tf-Fxi}, \eqref{tf-Fxi2} and \eqref{tf-nxi-t}, the relation \eqref{ntn} takes the form
\begin{equation*}\label{ntn-xi2}
2(\h-h)g(\f x,\f z)=F(\xi,x,\f z)-\eta(x)\om(z),
\end{equation*}
which for an $\F_5$-manifold, due to \eqref{F5}, implies
$
\h=h
$, \ie 
\begin{equation}\label{fk-fk2}
\frac{f}{k}=\frac{\tf}{\tk}.
\end{equation}

To express some curvature properties of accR manifolds, an associated quantity $\tau^*$ of the scalar curvature $\tau$ of $g$ is used in \cite{Man3}.
It is defined by the following trace of the Ricci tensor $\rho$: $\tau^*= g^{ij}\rho_{is}\f^s_j$ with respect to the basis $\{e_1,\dots,\A{}e_{2n},\A{}\xi\}$.
The relation between $\ttt$ and $\tau^*$ for a man\-i\-fold belonging to $\F_5^0\subset\F_5$ is given in \cite[Corolary 2]{Man3} as follows
\begin{equation}\label{tt-F5}
\ttt=-\tau^*-\frac{2n+1}{2n}\left(\ta^*(\xi)\right)^2-2\xi\!\left(\ta^*(\xi)\right).
\end{equation}
The subclass $\F_5^0$ of $\F_5$ is introduced in \cite{ManGri93} by the condition that the Lee form $\ta^*$ of the manifold be closed, \ie $\D\,\ta^*=0$. The last equality is equivalent to the following condition 
\begin{equation}\label{F50cond}
\D\!\left(\ta^*(\xi)\right)=\xi\!\left(\ta^*(\xi)\right)\eta.
\end{equation}

Using \eqref{tf-Fxi}, we compute that
\[
\ta^*(\xi)=2nh, \qquad \xi\!\left(\ta^*(\xi)\right)=2n\,\D{h}(\xi)
\]
and therefore \eqref{tt-F5} takes the form
\begin{equation}\label{tt-F5-}
\ttt=-\tau^*-2n(2n+1)h^2-4n\,\D{h}(\xi).
\end{equation}

%
%

\section{Example: The cone over a 2-dimensional complex space form with Norden metric}



In this section we consider an accR manifold construction given in \cite{HM16}.

First, let $(\NN,J,g')$ be a 2-dimensional almost complex manifold with Norden metric, \ie
$J$ is an almost complex structure 
and $g'$ is a pseudo-Riemannian metric with neutral signature such that $g'(Jx',Jy')=-g'(x',y')$ for arbitrary $x'$, $y'\in\Gamma(T\NN)$.
It is then known that $(\NN,J,g')$ is a complex space form with constant sectional curvature, denoted e.g. $k'$.

Second, let $\mathcal{C}(\NN)$ be the cone over $(\NN,J,g')$, \ie $\mathcal{C}(\NN)$ is the warped product $\R^+\times_t \NN$ with a generated metric $g$ as follows
\begin{equation*}\label{C-g}
 g\left(\left(x',a\ddt\right),\left(y',b\ddt\right)\right)
=t^2\,g'(x',y')+ab,
\end{equation*}
where $t$ is the coordinate on the set of positive reals $\R^+$ and $a$, $b$ are
differentiable functions on $\mathcal{C}(\NN)$.
Moreover, $\mathcal{C}(\NN)$ is equipped with an almost contact structure $(\f,\xi,\eta)$ by
\begin{equation}\label{C-str}
\f |_{\ker\eta}=J, \quad \xi=\ddt, \quad \eta=\D t, \quad \f\xi=0,\quad \eta\circ\f =0.
\end{equation}

Then $(\mathcal{C}(\NN), \f, \xi, \eta, g)$ is a 3-dimensional accR manifold belonging to the class $\F_1\oplus\F_5$. In particular, this manifold can be of $\F_5$ if and only if $J$ is parallel with respect to the Levi-Civita connection of $g'$, but the constructed manifold cannot belong to $\F_1$ nor to $\F_0$ \cite{HM16}.

Let the considered manifold $(\mathcal{C}(\NN), \f, \xi, \eta, g)$ belong to $\F_5$.
Using the result $\ta^*(\xi)=\frac{2}{t}$ from \cite{HM16}, we verify that the condition in \eqref{F50cond} holds and therefore $(\mathcal{C}(\NN), \f, \xi, \eta, g)$ belongs to $\F_5^0$.

Let $\left\{e_1,e_2,e_3\right\}$  be a basis in any tangent space at an arbitrary point of $\mathcal{C}(\NN)$ such that
\begin{equation}\label{fbasis}
\begin{array}{c}
\f e_1=e_2,\quad \f e_2=-e_1,\quad e_3=\xi,\\[4pt]
g(e_1,e_1)=-g(e_2,e_2)=g(e_3,e_3)=1,\quad
g(e_i,e_j)=0,\; i\neq j.
\end{array}
\end{equation}

In \cite{HM16} it is shown that the nonzero components of 
$R$ of the constructed 3-di\-men\-sional manifold with respect to the basis $\left\{e_1,e_2,e_3\right\}$
are determined by the equality
$R_{1212} =\frac{1}{t^2}(k' - 1)$ and the well-known properties of $R$.
Obviously, $(\mathcal{C}(\NN), \f, \xi, \eta, g)$ is flat if and only if $k'=1$ for $(\NN,J,g')$.
The nonzero components of the Ricci tensor of $(\mathcal{C}(\NN), \f, \xi, \eta, g)$ in the general case are then calculated as
$\rho_{11} = -\rho_{22} = \frac{1}{t^2}(k' - 1).$
Furthermore, the scalar curvature $\tau$ and the associated quantity $\tau^*$ of $(\mathcal{C}(\NN), \f, \xi, \eta, g)$ are given by
\begin{equation}\label{tau-k}
\tau = \frac{2}{t^2}(k' - 1), \qquad \tau^*=0.
\end{equation}

Then, taking into account the vanishing of $\tau^*$, the expression
\begin{equation}\label{t*t}
\ta^*(\xi)=\frac{2}{t}
\end{equation}
and $n=1$, we calculate $\ttt$ by \eqref{tt-F5} as
\begin{equation}\label{ttt}
\ttt = -\frac{2}{t^2}.
\end{equation}

Using the results $\n_{e_1}e_3=\frac{1}{t}e_1$, $\n_{e_2}e_3=\frac{1}{t}e_2$, $\n_{e_3}e_3=0$ from \cite{HM16} and $e_3=\xi$ from \eqref{fbasis}, we derive for any $x$ on $\mathcal{C}(\NN)$ the following formula
\begin{equation}\label{nxi-Ex}
\n_x \xi = -\frac{1}{t}\f^2 x.
\end{equation}
Comparing the last equality with \eqref{tf-nxi}, we conclude that
\begin{equation}\label{fkt}
\frac{f}{k}=\frac{1}{t},
\end{equation}
\ie $h=\frac{1}{t}$ holds due to \eqref{h} and then \eqref{tt-F5-} is also valid.

From \eqref{f-dk}, \eqref{fkt} and the expression of $\xi$ in \eqref{C-str}, we obtain the differential  equation $t\D{k}=k\D{t}$,
whose solution for the function $k(t)$ is
\begin{equation}\label{kct}
k = c t,
\end{equation}
where $c$ is an arbitrary constant.
Hence \eqref{fkt} and \eqref{kct} imply
\begin{equation}\label{f}
f = c.
\end{equation}

Taking into account \eqref{Lg}, \eqref{nxi-Ex} and \eqref{kct}, we obtain
\begin{equation}\label{Lg=2cg}
\LL_{\vt}g=2cg.
\end{equation}

Let us define the following differentiable function on $\mathcal{C}(\NN)$
\begin{equation}\label{lm}
\lm=\frac{2}{t^2}(k' - 1)-c.
\end{equation}
Then, bearing in mind \eqref{tau-k}, \eqref{Lg=2cg} and \eqref{lm}, we check that the condition in \eqref{YS-g} is satisfied and  $(g;\vt,\lm)$ is a Yamabe almost soliton with vertical potential $\vt$.

Due to \eqref{kk} and \eqref{kct}, the soliton potential  $\vt$ is determined by $\vt = c t \xi$.
Then, because of $\D{t}=\eta$ from \eqref{C-str} and \eqref{nxi-Ex}, we obtain
$\n_x\vt=c x$.  This means that $\vt$ is torse-forming with conformal scalar $f=c$ and
zero generating form $\gm$.
According to \remref{rem:types}, the torse-forming vector field $\vt$ is concircular in the general case of our example and in particular when $c=1$ it is concurrent.
Obviously, every concircular vector field is torqued, which supports \corref{cor:1}.

Taking into account \eqref{tau-k}, \eqref{f} and \eqref{lm}, we check the truthfulness of \thmref{thm:tau}.

In \cite{GNKG93}, a relation between the Levi-Civita connections $\n$ and $\tn$ of $g$ and $\g$, respectively, is given for $\F_5$ as follows
\[
\tn_x y=\n_x y-\frac{\ta^*(\xi)}{2n}\left\{g(x,\f y)+g(\f x,\f y)\right\}\xi.
\]
This relation for $(\mathcal{C}(\NN), \f, \xi, \eta, g)$ and $y=\xi$ implies
$
\tn_x \xi=\n_x \xi,
$
which due to \eqref{nxi-Ex} gives
\begin{equation}\label{tnxi-Ex}
\tn_x \xi = -\frac{1}{t}\f^2 x.
\end{equation}
The expression in \eqref{tnxi-Ex} follows also from \eqref{defF5}, \eqref{ntn-xi} and \eqref{t*t}.

Then, using \eqref{tf-nxi-t} and \eqref{tnxi-Ex}, we have
\begin{equation}\label{fkt2}
\frac{\tf}{\tk}=\frac{1}{t},
\end{equation}
which supports \eqref{fk-fk2} and \eqref{fkt}.

In a manner similar to obtaining \eqref{kct} and \eqref{f}, starting with \eqref{fkt2}, we find
\begin{gather}\label{ckt2}
\tk=\tilde{c} t,\qquad \tilde{c}=\mathrm{const},
\\[4pt]
\label{f2}
\tf =\tilde{c}.
\end{gather}

By virtue of \eqref{Ltg}, \eqref{tnxi-Ex} and \eqref{ckt2}, we have
\begin{equation}\label{Lg=2cg2}
\LL_{\tvt}\g=2\tilde{c}\g.
\end{equation}

We define the following differentiable function on $\mathcal{C}(\NN)$
\begin{equation}\label{lm2}
\tlm=-\frac{2}{t^2}-\tilde{c},
\end{equation}
which together with \eqref{ttt} and \eqref{Lg=2cg2} shows that the condition in \eqref{YS-tg} holds.
Then, $(\g;\tvt,\tlm)$ is a Yamabe almost soliton with vertical potential $\tvt$.

Using \eqref{tf-v3-t}, \eqref{ckt2}, \eqref{f2} and $\D t=\eta$ from \eqref{C-str}, we obtain
$\n_x\tvt=\tilde{c} x$, which shows that $\tvt$ is torse-forming with conformal scalar $\tf=\tilde{c}$ and
zero generating form $\tgm$.
Therefore $\tvt$ is concircular for arbitrary $\tilde{c}$ and concurrent for $\tilde{c} =1$.
Obviously, every concircular vector field is torqued, which supports \corref{cor:2}.
Furthermore,
the results in \eqref{ttt}, \eqref{f2}, and \eqref{lm2} support \thmref{thm:ttau}.


\end{document}